\newtheorem{Theorem}{Theorem}[section]
\newtheorem{Lemma}{Lemma}[section]
\newtheorem{Corollary}{Corollary}[section]
\newtheorem{Definition}{Definition}[section]
\journal{Indagationes Mathematicae}
\begin{document}

\begin{frontmatter}



\title{Shrinking random $\beta$-transformation}


\author{Karma Dajani and Kan Jiang}

\address{Department of Mathematics, Utrecht University, Fac Wiskunde en informatica and MRI, Budapestlaan 6, P.O. Box 80.000, 3508 TA Utrecht, The Netherlands}

\begin{abstract}
 For any $n\geq 3$, let $1<\beta<2$ be the largest positive real number satisfying the equation  $$\beta^n=\beta^{n-2}+\beta^{n-3}+\cdots+\beta+1.$$ In this paper we define the shrinking random $\beta$-transformation $K$ and investigate natural invariant measures for $K$, and the induced transformation of $K$ on a special subset of the domain. We prove that both transformations have a unique measure of maximal entropy.  However, the measure induced from the intrinsically ergodic measure for $K$ is not the intrinsically ergodic measure for the induced system.
\end{abstract}

\begin{keyword}
Random $\beta$-transformation \sep Unique measure of maximal entropy \sep Invariant measure 



\end{keyword}

\end{frontmatter}


\section{Introduction}

Let $\beta\in(1, 2)$ and $x\in \mathcal{A}_{\beta}=[0, (\beta-1)^{-1}]$, we call a sequence $(a_n)_{n=1}^{\infty}\in\{0,1\}^{\mathbb{N}}$ a $\beta$-expansion of $x$ if $$x=\sum_{n=1}^{\infty}\dfrac{a_n}{\beta^n}.$$  Renyi \cite{Renyi} introduced the greedy map, and showed that  the greedy expansion $(a_i)_{i=1}^{\infty}$ of
$x\in [0,1)$ can be generated by defining $T(x)=\beta x \mod 1$ and letting $a_i=k$
whenever  $T^{i-1}(x)\in [k\beta^{-1},(k+1)\beta^{-1})$.
Since then, many papers were dedicated to the dynamical properties of this map, see for example \cite{Walters, Kalle, Tom1, Par64, DajaniDeVrie, Parry} and references therein.
However, Renyi's greedy map is not the unique dynamical approach to generate $\beta$-expansions. In \cite{KarmaCor}  (see also \cite{KM, DajaniDeVrie}) a new transformation was introduced, the random
$\beta$-transformation, that generates all possible $\beta$-expansions, see Figure 1. This transformation makes random choices between the maps $T_0(x)=\beta x$ and $T_1(x)=\beta x-1$ whenever the orbit falls into 
$[\beta^{-1},\beta^{-1}(\beta-1)^{-1}]$, which we refer to as the switch region.

 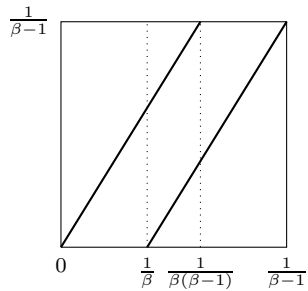
\begin{figure}[h]\label{figure1}
\centering
\begin{tikzpicture}[scale=3]
\draw(0,0)node[below]{\scriptsize 0}--(.382,0)node[below]{\scriptsize$\frac{1}{\beta}$}--(.618,0)node[below]{\scriptsize$\frac{1}{\beta(\beta-1)}$}--(1,0)node[below]{\scriptsize$\frac{1}{\beta-1}$}--(1,1)--(0,1)node[left]{\scriptsize$\frac{1}{\beta-1}$}--(0,.5)--(0,0);
\draw[dotted](.382,0)--(.382,1)(0.618,0)--(0.618,1);
\draw[thick](0,0)--(0.618,1)(.382,0)--(1,1);
\end{tikzpicture}\caption{The dynamical system for $\{T_{0}(x)=\beta x,\,T_{1}(x)=\beta x-1\}$}
\end{figure}

Although, all possible  $\beta$-expansions can be generated via the random $\beta$-transformation, nevertheless, for some practical problems one would want to make choices only on a subset of the switch region $[\beta^{-1},\beta^{-1}(\beta-1)^{-1}]$, for instance, 
in A/D (analog-to-digit) conversion \cite{Wang}. This motivates our study of 
the shrinking random $\beta$-transformation described below.

Let $1<\beta<2^{-1}(1+\sqrt{5})$,
 $\Omega=\{0,1\}^{\mathbb{N}}$, and $E=[0,(\beta-1)^{-1}]$. Set $a=(\beta^2-1)^{-1}, b=\beta(\beta^2-1)^{-1}$, i.e. $T_0(a)=b, T_1(b)=a$. 
The shrinking random $\beta$-transformation $K$ is defined in the following way.
 \begin{Definition}
 $K:\Omega\times E\to \Omega\times E$ is defined by
\begin{equation*}
K(\omega, x)=\left\lbrace\begin{array}{cc}
                (\omega, \beta x)& x\in[0,a)\\
       (\sigma(\omega), \beta x-\omega_1)& x\in[a,b]\\
              (\omega, \beta x-1)& x\in(b,(\beta-1)^{-1}]\\
                \end{array}\right.
\end{equation*}
\end{Definition}

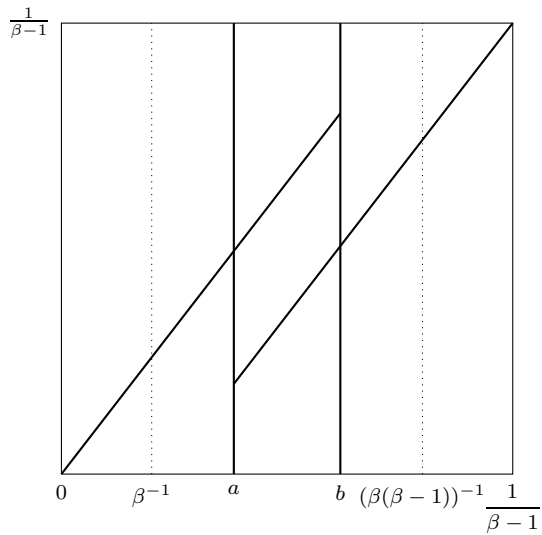
\begin{figure}[h]\label{figure1}
\centering
\begin{tikzpicture}[scale=6]
\draw(0,0)node[below]{\scriptsize 0}--(.2,0)node[below]{\scriptsize$\beta^{-1}$}--(.382,0)node[below]{\scriptsize$a$}--(.618,0)node[below]{\scriptsize$b$}--(.8,0)node[below]{\scriptsize$(\beta(\beta-1))^{-1}$}--(1,0)node[below]{\scriptsize$\dfrac{1}{\beta-1}$}--(1,1)--(0,1)node[left]{\scriptsize$\frac{1}{\beta-1}$}--(0,.5)--(0,0);
\draw[thick](.382,0)--(.382,1)(0.618,0)--(0.618,1);
\draw[thick](0,0)--(0.618,0.8)(.382,0.2)--(1,1);
\draw[dotted](.2,0)--(.2,1)(0.8,0)--(0.8,1);
\end{tikzpicture}\caption{Shrinking random $\beta$-transformation}
\end{figure}

\noindent Given  $(\omega,x)\in \Omega\times[a,b]$, the first return time is defined by 
$$\tau(\omega, x)=\min\{n\geq 1:  K^{i}(\omega,x)\notin \Omega\times[a,b], 1\leq  i\leq n-1, K^{n}(\omega,x)\in \Omega\times[a,b]\},$$ 
Define $K_{\Omega\times[a,b]}(\omega,x)=K^{\tau(\omega,x)}( \omega,x)$, and denote it for simplicity by $I$. 

We now consider a special family of algebraic bases defined as follows.  For any $n\geq 3$, let $1<\beta<2$ be the largest positive real number satisfying the equation  $$\beta^n=\beta^{n-2}+\beta^{n-3}+\cdots+\beta+1.$$  The following lemma is clear. 
\setcounter{Lemma}{1}
\begin{Lemma}\label{Goldmean}
For any $n\geq 3$, let $\beta_n>1$ be the largest positive real root of the equation 
\begin{equation}\label{Pisot}
x^n=x^{n-2}+x^{n-3}+\cdots+x+1.
\end{equation}
Then $(\beta_n)$ is an increasing sequence which converges to $2^{-1}(1+\sqrt{5})$.
\end{Lemma}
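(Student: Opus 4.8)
The plan is to reduce the degree-$n$ polynomial equation to a single, manifestly monotone equation. Dividing \eqref{Pisot} by $x^{n-2}$ (legitimate for $x>0$), the root $\beta_n$ is the positive solution of $F_n(x) := x^2 - \sum_{k=0}^{n-2} x^{-k} = 0$, and since $f_n(x) := x^n - \sum_{k=0}^{n-2} x^k = x^{n-2} F_n(x)$, the positive roots of the original polynomial coincide with those of $F_n$. Differentiating, $F_n'(x) = 2x + \sum_{k=1}^{n-2} k\, x^{-(k+1)} > 0$ on $(0,\infty)$, so $F_n$ is strictly increasing and has exactly one positive root; in particular that root is the largest positive real root, so $\beta_n$ is well defined and unique. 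Evaluating $F_n(1) = 2 - n < 0$ for $n\geq 3$ already forces $\beta_n > 1$.

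For monotonicity I would exploit the telescoping identity $F_{n+1}(x) = F_n(x) - x^{-(n-1)}$. Evaluating at $x = \beta_n$ gives $F_{n+1}(\beta_n) = -\beta_n^{-(n-1)} < 0$, and since $F_{n+1}$ is increasing with $F_{n+1}(\beta_{n+1}) = 0$, we conclude $\beta_n < \beta_{n+1}$; hence $(\beta_n)$ is strictly increasing. To bound the sequence, recall that the golden mean $\phi = 2^{-1}(1+\sqrt{5})$ satisfies $\phi^2 = \phi + 1$, equivalently $\phi - 1 = \phi^{-1}$, so $\sum_{k=0}^{\infty}\phi^{-k} = (1-\phi^{-1})^{-1} = \phi/(\phi-1) = \phi^2$. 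Therefore $F_n(\phi) = \phi^2 - \sum_{k=0}^{n-2}\phi^{-k} = \sum_{k=n-1}^{\infty}\phi^{-k} > 0$, and monotonicity of $F_n$ yields $\beta_n < \phi$ for every $n$. Thus $(\beta_n)$ is increasing and bounded above by $\phi$, so it converges to some $L \in (1,\phi]$.

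It remains to identify $L$. Writing the defining relation as $\beta_n^2 = (1 - \beta_n^{-(n-1)})/(1 - \beta_n^{-1})$ and letting $n\to\infty$, the term $\beta_n^{-(n-1)}$ tends to $0$, because $\beta_n \geq \beta_3 > 1$ gives $0 < \beta_n^{-(n-1)} \leq \beta_3^{-(n-1)} \to 0$, while $\beta_n^{-1} \to L^{-1}$ and $\beta_n^2 \to L^2$. Passing to the limit yields $L^2 = L/(L-1)$, i.e. $L^2 - L - 1 = 0$, whence $L = \phi = 2^{-1}(1+\sqrt{5})$.

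The routine work is the differentiation and the geometric-sum bookkeeping; the one point that needs genuine care is the passage to the limit, since both the base and the number of summands vary with $n$, and this is handled by the uniform lower bound $\beta_n \geq \beta_3 > 1$. The key that makes everything transparent --- and the step I would single out as the crux --- is the division by $x^{n-2}$, which converts \eqref{Pisot} into a strictly increasing function whose dependence on $n$ is a single decaying correction term $x^{-(n-1)}$; this simultaneously delivers uniqueness of the root, the monotonicity comparison $\beta_n<\beta_{n+1}$, and the uniform bound $\beta_n<\phi$.
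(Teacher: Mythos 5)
Your proof is correct, and it is worth noting that the paper gives no proof of this statement at all: the lemma is introduced with the sentence ``The following lemma is clear,'' so your argument supplies exactly the details the authors omitted. Every step checks out. The division by $x^{n-2}$ is indeed the right move: it produces the strictly increasing function $F_n(x)=x^2-\sum_{k=0}^{n-2}x^{-k}$ on $(0,\infty)$, so the positive root is unique (making ``largest positive real root'' unambiguous and forcing $\beta_n>1$ from $F_n(1)=2-n<0$); the telescoping relation $F_{n+1}(x)=F_n(x)-x^{-(n-1)}$ yields $\beta_n<\beta_{n+1}$; and the identity $\sum_{k=0}^{\infty}\phi^{-k}=\phi^{2}$ for the golden mean $\phi=2^{-1}(1+\sqrt 5)$ yields the uniform bound $\beta_n<\phi$. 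Your caution at the limit step is also well placed and correctly resolved: the uniform bound $\beta_n\ge\beta_3>1$ keeps the denominator $1-\beta_n^{-1}$ bounded away from $0$ and forces $\beta_n^{-(n-1)}\to 0$, so passing to the limit in $\beta_n^{2}=(1-\beta_n^{-(n-1)})/(1-\beta_n^{-1})$ is legitimate and gives $L^2-L-1=0$, i.e. $L=\phi$. If you want a slightly shorter ending, the mean value theorem does it in one line: since $F_n'\ge 2$ on $[1,\infty)$ and $F_n(\beta_n)=0$, you get $0<\phi-\beta_n\le\tfrac{1}{2}F_n(\phi)=\tfrac{1}{2}\sum_{k\ge n-1}\phi^{-k}\to 0$, which proves $\beta_n\to\phi$ directly, with a geometric rate, and dispenses with the two-variable limit passage altogether.
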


Throughout the paper we will assume $\beta=\beta_n$ for some $n\ge 3$.  In Section 2, we will show that $I$ can be identified with a full left shift. As a result it will be easy to find $I$-invariant measures, and to show that $I$ is intrinsically ergodic (i.e. has a unique measure of maximal entropy).  In the last section, we identify the dynamics of $K$ with a topological Markov chain, and then use Parry's recipe to prove the following result.
\setcounter{Theorem}{2}
 \begin{Theorem}\label{Main11111}
 For any $n\geq 3$, let $1<\beta<2$ be the largest positive real number satisfying the equation  $$\beta^n=\beta^{n-2}+\beta^{n-3}+\cdots+\beta+1.$$ Then  the  shrinking random $\beta$-transformation $K$ and the induced transformation  $I=K_{\Omega\times[a,b]}$ have intrinsically ergodic measures.  Moreover, the measure induced from the intrinsically ergodic measure of $K$ on $\Omega\times [a,b]$, does not yield the unique measure of  maximal entropy for $I$. 
\end{Theorem}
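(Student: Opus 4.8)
The plan is to establish the two intrinsic-ergodicity claims by exhibiting explicit symbolic models, and then to separate the two candidate measures of maximal entropy using Abramov's formula together with the uniqueness furnished by intrinsic ergodicity. First I would treat $I$: by the coding established in Section~2, the induced map $I=K_{\Omega\times[a,b]}$ is (topologically and measure-theoretically) conjugate to the one-sided full shift on a finite alphabet $\mathcal B$ with $m=|\mathcal B|$ symbols, the symbols recording the first-return behaviour (equivalently the random choices) in the switch region $[a,b]$. A full shift on $m$ symbols is intrinsically ergodic: its topological entropy is $\log m$ and its unique measure of maximal entropy $\nu$ is the uniform Bernoulli measure $(1/m,\dots,1/m)$. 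This settles the first assertion for $I$.

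Next I would model $K$ itself. Using the orbits of the endpoints $a,b$ under the deterministic branches $T_0,T_1$, I would build a finite Markov partition of $\Omega\times E$ adapted to the three branches of $K$ and to the coin flip in the switch region; here the defining relation $\beta_n^{\,n}=\beta_n^{\,n-2}+\cdots+\beta_n+1$ is exactly what forces these endpoint orbits (and hence the partition) to be finite. This yields a topological Markov chain with a $0/1$ transition matrix $M$. I would verify that $M$ is irreducible (every piece reaches every other through the deterministic flow combined with the switch), so that Parry's recipe applies: $h_{\mathrm{top}}(K)=\log\lambda$, where $\lambda$ is the Perron eigenvalue of $M$, and $K$ has a unique measure of maximal entropy $\mu$, the Parry measure, given in terms of the left and right Perron eigenvectors $u,v$ by $\mu[i]\propto u_iv_i$. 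This gives the intrinsic ergodicity of $K$ and completes the first assertion.

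For the \emph{moreover} part, set $A=\Omega\times[a,b]$ and let $\mu_A=\mu(\,\cdot\cap A)/\mu(A)$ be the induced measure, which is $I$-invariant. Since $\mu$ is the measure of maximal entropy of $K$, Abramov's formula gives
\begin{equation*}
h_{\mu_A}(I)=\frac{h_\mu(K)}{\mu(A)}=\frac{\log\lambda}{\mu(A)} .
\end{equation*}
Because $\nu$ is the \emph{unique} measure of maximal entropy of the full shift $I$ and $h_{\mathrm{top}}(I)=\log m$, and because $h_{\mu_A}(I)\le\log m$ always, we have $\mu_A=\nu$ if and only if $h_{\mu_A}(I)=\log m$, i.e. if and only if $\mu(A)=\log\lambda/\log m$. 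Thus the entire statement reduces to reading off $\mu(A)$ from the Parry data — it is the sum of $\mu[i]=u_iv_i/(u\cdot v)$ over the states comprising the switch region — and checking the strict inequality $\mu(A)\neq\log\lambda/\log m$, equivalently $h_{\mu_A}(I)<\log m$.

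I expect the main obstacle to be precisely this final explicit comparison: one must pin down $M$ (hence $\lambda$, $u$, $v$) and $m$ as functions of $n$, evaluate $\mu(A)$ in closed form, and show it does not equal $\log\lambda/\log m$. The defining relation for $\beta_n$ should collapse the characteristic polynomial of $M$ and the eigenvector entries into tractable expressions, and the content of the theorem is that the resulting numbers genuinely fail to coincide. Conceptually this is because $\mu_A$ inherits the nontrivial correlations of the Markov measure $\mu$, whereas the maximising measure $\nu$ of $I$ is memoryless (uniform Bernoulli); the two are therefore forced apart, with $h_{\mu_A}(I)$ lying strictly below the maximal value $\log m$.
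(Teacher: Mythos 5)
Your outline follows the paper's strategy almost exactly: identify $I$ with the full shift on $2n-2$ symbols (whose unique measure of maximal entropy is the uniform Bernoulli measure), identify $K$ with an irreducible subshift of finite type via the Markov partition generated by the orbits of $a$ and $b$, take its Parry measure $\mu$, and use Abramov's formula to reduce the \emph{moreover} clause to the strict inequality $h_{\mu_A}(I)<\log(2n-2)$. Up to that reduction your argument is sound. The genuine gap is that you never prove this inequality: you explicitly defer it (``the main obstacle\dots the content of the theorem is that the resulting numbers genuinely fail to coincide''), and what you offer in its place is a heuristic, not a proof. This inequality \emph{is} the quantitative heart of the theorem, and the paper spends its Lemmas 3.2 and 3.3 on it: one computes the Perron data of $S_n$ explicitly (the characteristic polynomial collapses to $\lambda^n=2(1+\lambda+\cdots+\lambda^{n-2})$, the right eigenvector is $(c,\lambda c,\ldots,\lambda^{n-1}c,\ldots,\lambda c,c)$, and the switch state has Parry mass $\mu(\Omega\times[a,b])=cd\lambda^n$ with $\tfrac{1}{cd}=\tfrac{2}{\lambda-1}\bigl(\lambda^{n-1}-n+\tfrac{\lambda^n}{2}\bigr)+\lambda^n$), and then proves $\log(2n-2)>\tfrac{\log\lambda}{cd\lambda^n}$ by checking $n=3,4$ directly and handling $n\geq 5$ via $\tfrac{2n-2}{\lambda}\geq\tfrac{8}{\lambda}\geq\lambda^2$ together with the characteristic equation and $1<\lambda<2$. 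Without some such estimate your proof establishes the two intrinsic-ergodicity claims but not the final clause of the theorem.

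Moreover, the conceptual justification you substitute for the computation is incorrect. By the strong Markov property, successive excursions of a stationary Markov chain from a fixed state are i.i.d., so the induced measure $\mu_A$ does \emph{not} ``inherit nontrivial correlations'': under the coding of $I$ by first-return words it is itself a Bernoulli measure, whose weight on a return word of length $k$ telescopes to exactly $\lambda^{-k}$ (the eigenvector entries cancel along a loop). The true reason $\mu_A\neq\nu$ is not memory but non-uniformity of these weights: since return times take at least the two values $2$ and $3$, and $\lambda>1$, we have $\lambda^{-2}\neq\lambda^{-3}$, so $\mu_A$ is a non-uniform Bernoulli measure and cannot equal the uniform Bernoulli measure $\nu$. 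Ironically, this observation repairs your proof with far less computation than the paper's entropy estimate: if you replace your closing heuristic by the telescoping calculation of the excursion probabilities, you get a complete and shorter argument for the \emph{moreover} part; as written, however, that part remains unproven.
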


\section{Invariant measures for $I=K_{\Omega\times[a,b]}$}

As above, $\beta$ satisfies $\beta^n=\beta^{n-2}+\beta^{n-3}+\cdots+\beta+1$, $n\ge 3$.
It is easy to check that for $(\omega ,x)\in \Omega\times [a,b]$,  the first return time $\tau(\omega,x)\in \{2,3,\cdots,n\}$. We give a simple proof for this statement. Since $a=T_1(b), b=T_0(a)$ and the fact that $\beta$ satisfies the equation $\beta^n=\beta^{n-2}+\beta^{n-3}+\cdots+\beta+1$,  it follows that the largest return time of $a$ is $n$. Similarly, we can show that those points,  which are very close to $a$ or $b$, have the return time  $2$.  What we want to emphasize here is that we delete these points with return time $1$. As there are only countable  such points, we can delete these points without affecting our result.

Consider the space $ \Omega\times \{2,3,\cdots,n\}^{\mathbb{N}}$ equipped with the product $\sigma$-algebra, and the left shift $\sigma^{\prime}$. Define the map
$\phi: (\Omega\times [a,b])\setminus(\cup_{i=0}^{\infty} K^{-i}(\Omega\times\{a\} \cup \Omega\times\{b\}))\to \Omega\times \{2,3,\cdots,n\}^{\mathbb{N}}$ by
$$\phi(\omega,x)=(\omega,(n_1,n_2,\cdots,n_k,\cdots)),$$
where $n_i$ is the $i$-th return time of $(\omega,x)$ to $(\Omega\times [a,b])\setminus(\cup_{i=0}^{\infty} K^{-i}(\Omega\times\{a\} \cup \Omega\times\{b\})$, i.e.
$n_i=\tau(I^{i-1}(\omega,x))$.

Given $(a_n)\in\{0,1\}^{\mathbb{N}}$, we denote the value of the sequence $(a_n)$ by $(a_n)_{\beta}=\sum_{n=1}^{\infty}a_n\beta^{-n}$.
\begin{Lemma}\label{expansions}
The sequences $$(01)^{j_1}(1\underbrace{0\cdots0}_{n-1})^{j_2}(01)^{j_3}(1\underbrace{0\cdots0}_{n-1})^{j_4}\cdots $$
and 
$$(10)^{j_1}(0\underbrace{1\cdots1}_{n-1})^{j_2}(10)^{j_3}(0\underbrace{1\cdots1}_{n-1})^{j_4}\cdots $$
 are the possible $\beta$-expansions of $a$ and $b$ generated by the map $K$ respectively, where $0\leq j_k\leq \infty$.
\end{Lemma}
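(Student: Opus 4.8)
The plan is to reduce the statement to two algebraic identities for the candidate return words and then read the expansions off the behaviour of $K$ on the pair $\{a,b\}$. First I would record the normalizations $T_0(a)=\beta a=b$ and $T_1(b)=\beta b-1=a$, together with the identity obtained by dividing the defining equation by $\beta^n$,
$$\beta^{-2}+\beta^{-3}+\cdots+\beta^{-n}=1,$$
equivalently $\sum_{j=0}^{n-2}\beta^{j}=\beta^{n}$. Using the elementary fact that applying a digit block $d_1\cdots d_\ell$ to a point $x$ produces $\beta^{\ell}x-\sum_{i=1}^{\ell}d_i\beta^{\ell-i}$, these relations give at once
$$\beta^{n}a-\beta^{n-1}=a,\qquad \beta^{n}(b-1)=b,$$
so the length-$n$ blocks $1\underbrace{0\cdots0}_{n-1}$ and $0\underbrace{1\cdots1}_{n-1}$ fix $a$ and $b$ respectively, while the length-$2$ blocks $01$ and $10$ fix $a$ and $b$ through the exchange $a\leftrightarrow b$. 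This is the computational core, and it is exactly here that the special algebraic value of $\beta$ enters.

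Next I would check that these four blocks are genuinely \emph{generated by} $K$, i.e. that every intermediate iterate lies in the region dictating the claimed digit. For the block $1\,0^{\,n-1}$ starting at $a$ one verifies that $c:=\beta a-1=b-1$ lies in $[0,a)$ and that its forward images $\beta c,\dots,\beta^{\,n-2}c$ remain in $[0,a)$ while $\beta^{\,n-1}c=a$, so that after the free choice of the digit $1$ in the switch region the subsequent $n-1$ zeros are forced; the block $0\,1^{\,n-1}$ starting at $b$ is handled by the symmetric computation using $\beta b\in(b,(\beta-1)^{-1}]$ and forced ones. The two length-$2$ blocks visit only $a$ and $b$, which lie in the switch region $[a,b]$, so both digits there are admissible. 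Locating these finitely many orbit points inside the correct subintervals of $E$ for \emph{every} $n\ge 3$ is the step I expect to be the main technical obstacle, since it is precisely where the inequalities $1<\beta_n<2^{-1}(1+\sqrt5)$ and the monotonicity from Lemma~\ref{Goldmean} must be used quantitatively.

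Finally I would assemble the expansions. Because each $K$-orbit satisfies $x_0=\sum_{i=1}^{k}a_i\beta^{-i}+\beta^{-k}x_k$ with $x_k\in E$ bounded, letting $k\to\infty$ shows that any digit sequence produced by $K$ is a genuine $\beta$-expansion of its starting point; in particular each displayed sequence sums to $a$ or $b$. Conversely, I would track the orbit of $a$ (resp. $b$) through its successive returns to $\{a,b\}$: from $a$ the first-return words to this set are $0$ (reaching $b$) and $1\,0^{\,n-1}$ (reaching $a$ again), and symmetrically from $b$ they are $1$ and $0\,1^{\,n-1}$, by the region analysis above. Reading off the successive return words then produces the alternating block form, a maximal run of $a\to b\to a$ excursions contributing $(01)^{j}$ and a maximal run of self-returns at $a$ contributing $(1\,0^{\,n-1})^{j}$, with the exponents $j_k\in\{0,1,\dots,\infty\}$ counting these runs and $j_k=\infty$ covering the periodic orbits $(01)^\infty$ and $(10)^\infty$. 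The bookkeeping that converts a path in the two-state return graph into the displayed alternating form is routine once the admissible return words have been pinned down, so the whole argument rests on the algebraic identities of the first paragraph and the region-membership verification of the second.
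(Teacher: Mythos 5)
Your first two paragraphs are correct and are essentially the paper's own proof written out in detail: the block-fixing identities you verify — that $1\underbrace{0\cdots0}_{n-1}$ fixes $a$, that $0\underbrace{1\cdots1}_{n-1}$ fixes $b$, and that $01$, $10$ fix $a$, $b$ through the exchange $a\leftrightarrow b$ — are exactly the relations $a=T_1(b)=T_0^{n-1}T_1(a)$ and $b=T_0(a)=T_1^{n-1}T_0(b)$ that the paper cites as its one-line proof, and your region-membership checks simply make the phrase ``generated by $K$'' rigorous. This establishes that every sequence of the displayed form is a $K$-generated $\beta$-expansion of $a$ (resp.\ $b$), which is the only direction the paper ever uses later (in the surjectivity step of Lemma \ref{bijection} one needs only that such sequences have value exactly $a$).

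The converse direction in your last paragraph, however, contains a genuine error, and it cannot be repaired, because the completeness claim it aims at is false. Your two-state return graph is right: from $a$ the admissible return words to $\{a,b\}$ are $0$ and $1\,0^{\,n-1}$, and from $b$ they are $1$ and $0\,1^{\,n-1}$. But paths in this graph do \emph{not} reduce to the displayed alternating form: a path may go from $a$ to $b$, self-loop at $b$ some $k\ge 1$ times, and only then return to $a$, emitting the word $0\,(0\,1^{\,n-1})^{k}\,1$. This word begins with $00$, whereas any concatenation of the blocks $01$ and $1\,0^{\,n-1}$ that begins with $0$ must have $1$ as its second digit. Concretely, for $n=3$ the digit string $00111\,(01)^{\infty}$ is read off a legitimate $K$-orbit of $a$ (choose $0$ at $a$, choose $0$ at $b$, two forced $1$'s return the orbit to $b$, choose $1$, then alternate), so by your own telescoping argument it is a $K$-generated expansion of $a$; yet it is not of the form $(01)^{j_1}(100)^{j_2}(01)^{j_3}\cdots$. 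So the ``routine bookkeeping'' you invoke hides an impossible step: runs of self-returns at $b$ occur strictly inside an excursion from $a$ and cannot be absorbed into $(01)^{j}$ or $(1\,0^{\,n-1})^{j}$ blocks. The lemma is therefore only true — and is only proved by the paper, whose proof makes no attempt at a converse — in the inclusion reading that your first two paragraphs establish; the set of all $K$-generated expansions of $a$ is strictly larger than the displayed family.
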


\begin{proof}
The proof follows from the fact that $a=T_1(b)=T_0^{n-1}T_1(a)$ and $b=T_0(a)=T_1^{n-1}T_0(b)$. 
\end{proof}

\begin{Lemma}\label{bijection}
$\phi$ is a measurable bijection.
\end{Lemma}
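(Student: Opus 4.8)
The plan is to realise $\phi$ as the symbolic coding attached to a full-branch expanding structure of the induced map $I$, and then run the standard nested-cylinder argument. First I would analyse a single step of $I$. Fix $(\omega,x)\in\Omega\times[a,b]$ and split according to $\omega_1$. If $\omega_1=1$, the switch step sends $x$ to $T_1(x)=\beta x-1\in[b-1,a)\subset(0,a)$, after which $K$ acts as $T_0$ (multiplication by $\beta$) until the orbit re-enters $[a,b]$; since $[a,b]=[a,\beta a]$ has $\beta$-ratio exactly $\beta$, there is a unique first exponent $k\ge 1$ with $\beta^{k}(\beta x-1)\in[a,b)$, so $\tau(\omega,x)=k+1$ and $I(\omega,x)=(\sigma\omega,\,T_0^{\,k}T_1(x))$. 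Using $a=T_0^{\,n-1}T_1(a)$ from Lemma~\ref{expansions}, the value $k$ runs from $n-1$ (at $x=a$) down to $1$ (as $x\to b^-$), so $\tau$ takes each value $m\in\{2,\dots,n\}$ on a subinterval $J_m^{1}=\{\tau=m\}$, these subintervals partition $[a,b)$, and $T_0^{\,m-1}T_1$ maps $J_m^{1}$ affinely and bijectively onto $[a,b)$. The case $\omega_1=0$ is identical after applying the reflection $R(x)=(\beta-1)^{-1}-x$, which satisfies $RT_1R=T_0$ and $R(a)=b$, so it interchanges the two cases. Thus, for every $\omega$, the map $I(\omega,\cdot)$ is a full-branch expanding map of $[a,b]$ whose branches are indexed exactly by $\{2,\dots,n\}$, each onto, with derivative $\beta^{m}\ge\beta^{2}>1$.

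Granting this branch structure, measurability of $\phi$ is immediate: each level set $\{\tau=m\}$ is an interval, and $\{(\omega,x):n_i=m\}=\{\tau\circ I^{i-1}=m\}$ is Borel because it is a finite combination of preimages of the region partition under iterates of $K$; hence every coordinate of $\phi$ is measurable and so is $\phi$. For injectivity I would fix a target $(\omega,(m_1,m_2,\dots))$ and form the nested sets $C_j=\{x:\tau(I^{i-1}(\omega,x))=m_i,\ 1\le i\le j\}$. By the full-branch property each $C_j$ is an interval that $I^{j}(\omega,\cdot)$ maps bijectively onto $[a,b)$, so $|C_j|\le(b-a)\beta^{-(m_1+\cdots+m_j)}\le(b-a)\beta^{-2j}\to 0$. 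Hence $\bigcap_j C_j$ is a single point; since $\omega$ is recorded verbatim in the image, the pair $(\omega,x)$ is recovered from its $\phi$-image. Surjectivity runs through the same cylinders: because each branch is onto, every $C_j$ is a nonempty interval and the $C_j$ are nested, so $\bigcap_j C_j\ne\varnothing$ and any $x$ in it realises the prescribed return-time sequence. The measurability of $\phi^{-1}$ then follows either from the explicit formula $\phi^{-1}(\omega,(m_i))=(\omega,\lim_j\inf C_j)$ as a pointwise limit of measurable functions, or abstractly from the Lusin--Souslin theorem, since $\phi$ is an injective Borel map between standard Borel spaces.

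The one point requiring care --- and the step I expect to be the main obstacle --- is the boundary bookkeeping. The single point $\bigcap_j C_j$ produced by an extreme sequence such as $(n,n,n,\dots)$ is exactly an endpoint of $[a,b]$: the branch $\{\tau=n\}$ is the leftmost one and satisfies $I(a)=T_0^{\,n-1}T_1(a)=a$, so this code converges to $a$. Such endpoints, together with all their $K$-preimages, are precisely the points removed from the domain of $\phi$, namely $\bigcup_{i\ge 0}K^{-i}(\Omega\times\{a\}\cup\Omega\times\{b\})$. I would therefore verify that this removed set corresponds under the coding to exactly the countable collection of ``boundary'' return-time sequences identified by Lemma~\ref{expansions} (the orbits of $a$ and $b$ and their preimages), so that after deleting these measure-zero sequences from the target $\phi$ is a genuine bijection; this is where Lemma~\ref{expansions} does its work, as it pins down precisely which orbits reach $a$ or $b$ and hence which codes must be excluded. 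Since these exceptional sets are countable, the identification is a bijection up to a set that is negligible for every non-atomic measure, which is all that is needed for the entropy and invariant-measure conclusions that follow.
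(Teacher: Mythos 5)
Your route is genuinely different from the paper's: the paper proves injectivity by writing $x$ explicitly as the digit string $(\omega_1\overline{\omega_1}^{n_1-1}\omega_2\overline{\omega_2}^{n_2-1}\cdots)_{\beta}$ and proves surjectivity by a block-by-block comparison of this string with the expansions of $a$ and $b$ from Lemma~\ref{expansions} (plus the reflection $x\mapsto(\beta-1)^{-1}-x$), whereas you build the full-branch GLS structure of $I$ (which the paper only introduces later, via $L_0,L_1$) and run nested-cylinder arguments. Your single-step analysis, the measurability argument, and the injectivity argument (cylinder lengths $(b-a)\beta^{-(m_1+\cdots+m_j)}\to 0$) are correct. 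The genuine gap is in surjectivity: the step ``every $C_j$ is a nonempty interval and the $C_j$ are nested, so $\bigcap_j C_j\ne\varnothing$'' fails, because the branches are onto half-open intervals ($[a,b)$ for $\omega_1=1$, $(a,b]$ for $\omega_1=0$), and nested half-open intervals with lengths tending to $0$ can have empty intersection. This is not a removable technicality. Take $\omega=(1,1,1,\dots)$ and return times $(3,2,2,2,\dots)$: each $C_j$ is of the form $[y_j,c)$ with the same open right endpoint $c$ (the preimage of $b$ in the return-time-$3$ branch) and $y_j\uparrow c$, so $\bigcap_j C_j=\varnothing$; indeed no point at all has this code, since successive first returns of such a point would have to approach $b$ while the expansion pushes them away from $b$. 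Likewise $\omega=(1,1,\dots)$ with return times $(n,n,n,\dots)$ gives $\bigcap_j C_j=\{a\}$, a point excluded from the domain. In general the codes missed by $\phi$ are exactly those having a tail lying in $\{(0,2),(1,n)\}^{\mathbb{N}}$ or in $\{(1,2),(0,n)\}^{\mathbb{N}}$, i.e.\ tails coding the expansions of $a$ and $b$ in Lemma~\ref{expansions}; so $\phi$ is honestly not onto $\Omega\times\{2,\dots,n\}^{\mathbb{N}}$.

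Two claims in your final paragraph also need correction. First, this exceptional set of codes is uncountable, not countable: for each fixed $k$, the codes whose tail beyond position $k$ uses only the two symbols $(0,2),(1,n)$ form a full $2$-shift. Second, it is not ``negligible for every non-atomic measure'': the $(\tfrac12,\tfrac12)$-Bernoulli measure supported on sequences all of whose symbols lie in $\{(0,2),(1,n)\}$ is non-atomic, shift-invariant, and gives the missed set full measure. What is true --- and what the rest of the paper actually needs --- is that the missed set is null for every product measure with full marginal support, in particular for $m_p\times\mu_{\pi}$ and for the uniform Bernoulli measure realizing the maximal entropy $\log(2n-2)$, so that $\phi$ is a bimeasurable bijection modulo null sets for those measures. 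To be fair to you, your instinct that the boundary bookkeeping is the crux is sound, and your framing is in one respect more careful than the paper's own proof: the paper checks only that the candidate point $x$ lies in $[a,b]$ and declares this ``sufficient,'' never verifying that the return times of $x$ equal the prescribed ones --- and for the codes exhibited above they do not (e.g.\ for $\omega=(1,1,\dots)$ and prescribed times $(3,2,2,\dots)$, the candidate $x=(100(10)^{\infty})_{\beta}$ satisfies $T_0T_1(x)=a$, so its actual first return time is $2$). So as a proof of the lemma as literally stated your argument falls short, but so does the paper's; the defensible statement is the mod-$0$ bijection you end up with, with ``countable'' and ``every non-atomic measure'' corrected as above.
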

\begin{proof}
Firstly we prove $\phi$ is one-to-one. Let $\phi(\omega,x_1)=\phi(\tau,x_2)$. Then we have $\omega=\tau$, and the first return time functions coincide. We denote the values of  this function   by $(n_i)^{\infty}_{i=1}$. Since $x_1\in [a, b]$ and $\omega=(\omega_1\omega_2\omega_3\cdots)$, by the definition of $K$, we choose the first digit of $x_1$ by $\omega_1$. Then the orbit of $x_1$ jumps out of $[a,b]$, and in  the region $[0, a)\cup (b, (\beta-1)^{-1}]$, we can only choose digits $\overline{\omega_1}$ for $n_1-1$ times.  After $n_1$ times later, the orbit of $x_1$ goes back to $[a,b]$. Therefore we can implement similar algorithm again. Using this idea, 
one can easily check that 
$$x_1=(\omega_1\overline{\omega_1}^{n_1-1}\omega_2\overline{\omega_2}^{n_2-1}\cdots)_{\beta},$$
and 
$$x_2=(\tau_1\overline{\tau_1}^{n_1-1}\tau_2\overline{\tau_2}^{n_2-1}\cdots)_{\beta}$$
where $\overline{\omega_i}=1-\omega_i$, and $ (\omega_i)^{k}$ means $k$ consecutive $\omega_i$.

As such we have $x_1=x_2$. 
Now we prove $\phi$ is also a surjection. Given any $(\omega, (n_1,n_2,\cdots,n_k,\cdots))$,  
it is sufficient to show that $$x=(\omega_1\overline{\omega_1}^{n_1-1}\omega_2\overline{\omega_2}^{n_2-1}\cdots)_{\beta}\in [a,b]$$

We decompose the sequence $(\omega_1\overline{\omega_1}^{n_1-1}\omega_2\overline{\omega_2}^{n_2-1}\cdots)$  into  the blocks $\omega_i\overline{\omega_i}^{n_i-1}$. Note that for any $i\geq 1$, the value of the block can be classified in the following way:
 
 \noindent If $\omega_i=0$ and $n_i=2$, then
$$(\omega_i\overline{\omega_i}^{n_i-1})_{\beta}= (01)_{\beta}.$$
 If $\omega_i=0$ and $n_i\geq 3,$ then 
$$(\omega_i\overline{\omega_i}^{n_i-1})_{\beta}\geq (1\underbrace{0\cdots0}_{n-1})_{\beta}.$$
 If $\omega_i=1$, then 
$$(\omega_i\overline{\omega_i}^{n_i-1})_{\beta}\geq (1\underbrace{0\cdots0}_{n-1})_{\beta}.$$
Here we use the fact  $1<\beta<\dfrac{\sqrt{5}+1}{2}$, see Lemma \ref{Goldmean}.
Hence, we have  $$x=(\omega_1\overline{\omega_1}^{n_1-1}\omega_2\overline{\omega_2}^{n_2-1}\cdots)_{\beta}\geq ((01)^{j_1}(1\underbrace{0\cdots0}_{n-1})^{j_2}(01)^{j_3}(1\underbrace{0\cdots0}_{n-1})^{j_4}\cdots )_{\beta}=a.$$
or $$x=(\omega_1\overline{\omega_1}^{n_1-1}\omega_2\overline{\omega_2}^{n_2-1}\cdots)_{\beta}\geq ((1\underbrace{0\cdots0}_{n-1})^{j_1}(01)^{j_2}(1\underbrace{0\cdots0}_{n-1})^{j_3}(01)^{j_4}\cdots )_{\beta}=a.$$
Similarly, we  prove by symmetry that 
 $$\bar{x}=(\beta-1)^{-1}-x=(\overline{\omega_1}\omega_1^{n_1-1}\overline{\omega_2}\omega_2^{n_2-1}\cdots)_{\beta}\geq  ((01)^{j_1}(1\underbrace{0\cdots0}_{n-1})^{j_2}(01)^{j_3}(1\underbrace{0\cdots0}_{n-1})^{j_4}\cdots )_{\beta}=a$$
 or
 $$\bar{x}=(\beta-1)^{-1}-x=(\overline{\omega_1}\omega_1^{n_1-1}\overline{\omega_2}\omega_2^{n_2-1}\cdots)_{\beta}\geq ((1\underbrace{0\cdots0}_{n-1})^{j_1}(01)^{j_2}(1\underbrace{0\cdots0}_{n-1})^{j_3}(01)^{j_4}\cdots )_{\beta}=a.$$
Since $b=(\beta-1)^{-1}-a$, we have $a\leq x\leq b$ and $\phi$ is surjective. It remains to show that $\phi$ is measurable. For any cylinders $C=\{\omega\in\Omega:\omega_1=i_1,\cdots, \omega_m=i_m\}$ and $D=\{y\in  \{2,3,\cdots,n\}^{\mathbb{N}} :y_1=n_1,\cdots, y_m=n_m\}$, we have
$$\phi^{-1}(C\times D)=\{(\omega,x)\in \Omega\times [a,b]: \tau(\omega,x)=n_1, \tau(I(\omega,x))=n_2\cdots,\tau(I^{m-1}(\omega,x))=n_m\}$$
which is a measurable set, since $\tau$ and $I$ are measurable.
\end{proof}

 \begin{Lemma}\label{isomorphism}
 Let $\mu$ be any $\sigma\times \sigma^{\prime}$-invariant measure on $\Omega\times \{2,3,\cdots,n\}^{\mathbb{N}}$. Then, the measure $\mu\circ \phi$ is $I$-invariant, and the dynamical systems $(\Omega\times [a,b], I, \mu\circ \phi)$, and $(\Omega\times \{2,3,\cdots,n\}^{\mathbb{N}}, \sigma\times\sigma^{'}, \mu)$ are isomorphic.
\end{Lemma}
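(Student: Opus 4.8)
The plan is to show that $\phi$ is a measure-theoretic isomorphism by verifying that it conjugates the induced map $I$ to the shift $\sigma\times\sigma'$, and then transporting the invariance of $\mu$ along this conjugacy. By Lemma \ref{bijection} we already know $\phi$ is a measurable bijection, and its inverse is given explicitly by $\phi^{-1}(\omega,(n_i))=(\omega,(\omega_1\overline{\omega_1}^{n_1-1}\omega_2\overline{\omega_2}^{n_2-1}\cdots)_{\beta})$, which is measurable (indeed continuous) since it is a convergent power series in $\beta^{-1}$. Thus the only genuinely new ingredient is the intertwining identity $\phi\circ I=(\sigma\times\sigma')\circ\phi$.

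First I would establish this conjugacy relation for a fixed $(\omega,x)\in\Omega\times[a,b]$ lying outside the deleted countable set. The coordinate in $\{2,3,\cdots,n\}^{\mathbb{N}}$ is handled by the very definition of $\phi$: since $n_i=\tau(I^{i-1}(\omega,x))$, the return-time sequence of $I(\omega,x)$ is exactly $(n_2,n_3,\cdots)$, i.e. $(n_i)$ advanced once by $\sigma'$. The $\Omega$-coordinate is the point requiring care. The key observation is that during a single excursion from $\Omega\times[a,b]$ back to itself the map $K$ alters the $\Omega$-coordinate exactly once: at the initial step the second coordinate lies in $[a,b]$, so $K(\omega,x)=(\sigma(\omega),\beta x-\omega_1)$ applies $\sigma$, whereas for every $j$ with $1\leq j\leq\tau-1$ the point $K^{j}(\omega,x)$ has second coordinate in $[0,a)\cup(b,(\beta-1)^{-1}]$, on which $K$ acts as $(\omega,y)\mapsto(\omega,\beta y)$ or $(\omega,\beta y-1)$ and leaves $\omega$ untouched. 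Hence the first coordinate of $I(\omega,x)=K^{\tau(\omega,x)}(\omega,x)$ is $\sigma(\omega)$, and therefore $\phi(I(\omega,x))=(\sigma(\omega),(n_2,n_3,\cdots))=(\sigma\times\sigma')(\phi(\omega,x))$.

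With the conjugacy in hand, the invariance of $\mu\circ\phi$ is routine. For a measurable set $A\subseteq\Omega\times[a,b]$, the identity $\phi\circ I=(\sigma\times\sigma')\circ\phi$ together with bijectivity gives $\phi(I^{-1}A)=(\sigma\times\sigma')^{-1}\phi(A)$, so that $(\mu\circ\phi)(I^{-1}A)=\mu((\sigma\times\sigma')^{-1}\phi(A))=\mu(\phi(A))=(\mu\circ\phi)(A)$, where the middle equality uses the $\sigma\times\sigma'$-invariance of $\mu$. Finally, since $\phi$ is a bimeasurable bijection with $(\mu\circ\phi)\circ\phi^{-1}=\mu$ that intertwines $I$ with $\sigma\times\sigma'$, it is by definition an isomorphism between $(\Omega\times[a,b],I,\mu\circ\phi)$ and $(\Omega\times\{2,3,\cdots,n\}^{\mathbb{N}},\sigma\times\sigma',\mu)$.

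I expect the main obstacle to be precisely the bookkeeping in the $\Omega$-coordinate: one must confirm that a single return excursion triggers the shift $\sigma$ exactly once, and never at an intermediate step nor at the return step itself. This hinges on the fact that $K$ shifts $\omega$ only on the switch region $[a,b]$, and that by the definition of the first return time the orbit leaves $[a,b]$ immediately after the initial step and does not revisit it until time $\tau$. Once this is pinned down, the rest is a formal transport of structure along $\phi$.
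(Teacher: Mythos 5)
Your proposal is correct and follows essentially the same route as the paper's proof: establish the intertwining identity $\phi\circ I=(\sigma\times\sigma')\circ\phi$ and then use the measurable bijectivity of $\phi$ (Lemma \ref{bijection}) to transport the invariance of $\mu$. The paper dismisses both steps as "easy to check"; you simply fill in the details, including the worthwhile observation that $K$ shifts the $\Omega$-coordinate exactly once per excursion and that $\phi^{-1}$ is itself measurable (needed so that $\mu\circ\phi$ is well defined on forward images), neither of which changes the underlying argument.
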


\begin{proof} It is easy to check that $(\sigma\times\sigma^{'}) \circ \phi= \phi\circ I$.
 Since $\phi$ is a measurable bijection, $\mu\circ \phi$ is $I$-invariant and the result follows. 
\end{proof}
\setcounter{Corollary}{3}
\begin{Corollary} Let $m_p$ be the $(p,1-p)$ product measure on $\Omega$, and $\mu_{\pi}$ the product measure on $\{2,3,\cdots,n\}^{\mathbb{N}}$ induced by the probability vector $\pi=(\pi_2,\cdots,\pi_n)$, i.e. $\mu_{\pi}(\{(a_n)\in \{2,3,\cdots,n\}^{\mathbb{N}}:a_1=i_i, \cdots, a_m=i_m\})=\pi_{i_1}\cdots \pi_{i_m}$. Then, $(m_p\times\mu_{\pi})\circ \phi$ is an $I$-invariant ergodic measure on $\Omega\times [a,b]$.
\end{Corollary}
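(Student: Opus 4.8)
The plan is to establish both assertions for the symbolic product system $(\Omega\times\{2,3,\cdots,n\}^{\mathbb{N}},\sigma\times\sigma',m_p\times\mu_\pi)$ and then transport them across the isomorphism furnished by Lemma \ref{isomorphism}. First I would observe that $m_p\times\mu_\pi$ is $\sigma\times\sigma'$-invariant. Indeed, $m_p$ is a Bernoulli measure on $\Omega$, hence $\sigma$-invariant, and $\mu_\pi$ is a product measure on $\{2,3,\cdots,n\}^{\mathbb{N}}$, hence $\sigma'$-invariant; invariance of the product under the coordinatewise shift $\sigma\times\sigma'$ then follows by checking the defining identity on product cylinders $C\times D$, which is immediate. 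Given this, Lemma \ref{isomorphism} applied to the invariant measure $m_p\times\mu_\pi$ directly yields that $(m_p\times\mu_\pi)\circ\phi$ is $I$-invariant.

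For ergodicity I cannot simply appeal to ``a product of ergodic systems is ergodic'', since that statement is false in general. The clean way around this is to exploit the Bernoulli structure. The product space admits the canonical coordinate-pairing identification $\Omega\times\{2,3,\cdots,n\}^{\mathbb{N}}\cong(\{0,1\}\times\{2,3,\cdots,n\})^{\mathbb{N}}$ sending $(\omega,(n_i)_{i\ge 1})$ to the single sequence $((\omega_i,n_i))_{i\ge 1}$. Under this identification the transformation $\sigma\times\sigma'$ becomes the ordinary left shift on sequences over the finite alphabet $\{0,1\}\times\{2,3,\cdots,n\}$, and the measure $m_p\times\mu_\pi$ becomes the Bernoulli product measure assigning to the symbol $(i,j)$ the weight $m_p(\{i\})\,\pi_j$. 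Thus $(\Omega\times\{2,3,\cdots,n\}^{\mathbb{N}},\sigma\times\sigma',m_p\times\mu_\pi)$ is itself a one-sided Bernoulli shift.

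Since every Bernoulli shift is mixing, and in particular ergodic, $m_p\times\mu_\pi$ is ergodic for $\sigma\times\sigma'$. Finally, Lemma \ref{isomorphism} provides a measure-theoretic isomorphism between $(\Omega\times[a,b],I,(m_p\times\mu_\pi)\circ\phi)$ and $(\Omega\times\{2,3,\cdots,n\}^{\mathbb{N}},\sigma\times\sigma',m_p\times\mu_\pi)$, and ergodicity is invariant under isomorphism; hence $(m_p\times\mu_\pi)\circ\phi$ is $I$-ergodic, which completes the argument. The only genuine obstacle is this ergodicity step: the point is precisely that one must not multiply the two ergodic factors directly, but instead recognize the combined system as Bernoulli via the coordinate-pairing identification, after which mixing is automatic.
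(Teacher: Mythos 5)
Your proposal is correct, and the invariance step plus the final transport through Lemma \ref{isomorphism} coincide with what the paper does; the difference lies entirely in how ergodicity of $m_p\times\mu_\pi$ is established. The paper's proof is one line: it observes that $\mu_\pi$ is $\sigma'$-invariant (and, being a product measure, ergodic) and invokes the classical theorem that the product of a \emph{weakly mixing} system (here $\sigma$ with the Bernoulli measure $m_p$) with an ergodic system is ergodic. You instead interleave coordinates, identifying $\bigl(\Omega\times\{2,\dots,n\}^{\mathbb{N}},\,\sigma\times\sigma',\,m_p\times\mu_\pi\bigr)$ with the one-sided Bernoulli shift on the alphabet $\{0,1\}\times\{2,\dots,n\}$ with weights $m_p(\{i\})\pi_j$, and then use that Bernoulli shifts are mixing. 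Both arguments are valid, and you are right that neither can be replaced by a naive ``ergodic $\times$ ergodic is ergodic'' claim. Your route buys two things: it is self-contained (no appeal to the weak-mixing product theorem), and it proves the stronger conclusion that the induced measure is mixing, indeed Bernoulli, not merely ergodic. It is also worth noting that your pairing map is exactly the map $\rho$ that the paper itself introduces just before Theorem \ref{Importantresult} to identify $I$ with the full shift on $2n-2$ symbols, so your argument anticipates the paper's own later device and unifies the corollary with that theorem; the paper's route, by contrast, is shorter if one takes the standard product-ergodicity theorem as known.
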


\begin{proof} Note that $\mu_{\pi}$ is $\sigma^{\prime}$-invariant, and since $\sigma$ is weakly mixing, we have that $(m_p\times\mu_{\pi})$ is $\sigma\times \sigma^{\prime}$-invariant ergodic measure. By Lemma \ref{isomorphism}, it follows that $(m_p\times\mu_{\pi})\circ \phi$ is an $I$-invariant ergodic measure on $\Omega\times [a,b]$.

\end{proof}
Note that for different probability vectors $\pi^{(1)}$ and $\pi^{(2)}$, the corresponding measures $(m_p\times\mu_{\pi^{(1)}})\circ \phi$ and $(m_p\times\mu_{\pi^{(2)}})\circ \phi$ are singular with respect to each other. It is natural to ask the following question:
 when do  we have $(m_p\times\mu_{\pi})\circ \phi=m_p\times \lambda$, where $\lambda$ is the  normalized Lebesgue measure on $[a,b]$?

 To answer this question, we need to find an explicit expression for the induced transformation $K|_{\Omega\times[a,b]}$ in terms of the first return time. We begin by partitioning $[a,b]$ using the greedy orbits, i.e. when $x\in[a,b]$ we implement $T_1$ on $x$. Define the greedy map $L_1(x)=\beta^{n-i+1}x-\beta^{n-i}, $ where $x\in [c_i, c_{i+1})$, $c_1=a, c_n=b$,
$c_i=\beta^{i-1}a-\beta^{i-2}+\beta^{-1},\,2\leq  i\leq n-1$. 
Similarly, we can define the lazy map $L_0(x)$ (we choose $T_0$ if the orbits fall into $[a,b]$) by 
$$L_0(x)= \beta^{n-i+1}x-\beta^{n-i-1}-\beta^{n-i-2}-\beta^{n-i-3}-\cdots-\beta-1,$$
if $x\in (d_i,d_{i+1}]$, where  $d_1=a, d_n=b$,
$d_i=\beta^{n-i}b-\beta^{n-i-2}-\cdots \beta-1-\beta^{-1},\,2\leq  i\leq n-1$. 
It is easy to see that  $L_1$ and $ L_0$  are Generalized L\"uroth Series (GLS) maps \cite{KarmaCor1}.
Hence, the induced transformation $I=K|_{\Omega\times[a,b]}$ is given by $I(\omega,x)=(\sigma(\omega), L_{\omega_1}(x))$. 
We now answer the question posed above.
\setcounter{Theorem}{4}
\begin{Theorem}
Let $P=\left(\dfrac{p_1}{b-a}, \dfrac{p_2}{b-a},\cdots, \dfrac{p_{n-1}}{b-a}\right)$, where $p_i=\beta^{i}a-\beta^{i-1}a-(\beta^{i-1}-\beta^{i-2}), 1\leq i\leq n-2$, $p_{n-1}=b-\beta^{n-2}a-\beta^{n-3}a+\beta^{-1}$.  
Then, $m_p\times \lambda$ is an $I$-invariant ergodic measure and  
$(m_p\times P)\circ \phi=m_p\times \lambda$. 
\end{Theorem}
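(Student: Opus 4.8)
The plan is to read $P$ as the Bernoulli product measure $\mu_P$ on $\{2,3,\ldots,n\}^{\mathbb{N}}$ that assigns to the symbol $n-i+1$ the weight $p_i/(b-a)$, and to reduce the \emph{entire} statement to the single measure identity $(m_p\times\mu_P)\circ\phi=m_p\times\lambda$. Indeed, once this identity is in hand, the corollary above (applied with $\pi=P$) says that $(m_p\times\mu_P)\circ\phi$ is an $I$-invariant ergodic measure, and since it coincides with $m_p\times\lambda$, both assertions of the theorem follow at once. (For a direct check of invariance one may also argue from the skew-product form $I(\omega,x)=(\sigma\omega,L_{\omega_1}(x))$: the measure $m_p$ is $\sigma$-invariant and each GLS map $L_0,L_1$ preserves the normalized Lebesgue measure $\lambda$, so on product sets $(m_p\times\lambda)(I^{-1}(A\times B))=\int_\Omega\lambda(L_{\omega_1}^{-1}B)\,\mathbf{1}_{\sigma\omega\in A}\,dm_p=\lambda(B)\,m_p(A)$.)

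So the crux is the identity $(m_p\times\mu_P)\circ\phi=m_p\times\lambda$. Since $\phi$ is a measurable bijection (Lemma~\ref{bijection}), it suffices to verify it on the generating family of sets $S=\phi^{-1}(C\times D)$, with $C=\{\omega_1=i_1,\ldots,\omega_m=i_m\}$ and $D=\{n_1=k_1,\ldots,n_m=k_m\}$, and then invoke the $\pi$--$\lambda$ theorem. First I would identify the first-return partition of $[a,b]$ with the GLS partition: on the greedy branch $[c_i,c_{i+1})$ the map $L_1$ has slope $\beta^{\,n-i+1}$ and equals $T_0^{\,n-i}T_1$, so this branch is exactly $\{\tau=n-i+1\}$; by the reflection $x\mapsto(\beta-1)^{-1}-x$ (which swaps $a\leftrightarrow b$ and $L_1\leftrightarrow L_0$) the same holds for the lazy branches $(d_i,d_{i+1}]$. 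A direct computation, using the defining relation $\beta^n=\beta^{n-2}+\cdots+1$ to handle the extreme pieces, then shows that the normalized length of the branch giving return time $k$ is precisely the corresponding entry of $P$; this is where the numbers $p_i=\beta^{i}a-\beta^{i-1}a-(\beta^{i-1}-\beta^{i-2})$ and $p_{n-1}$ come from.

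The heart of the argument is a multiplicativity property of $\lambda$ over nested return-time constraints. Fixing $\omega$, each $L_{\omega_j}$ is a full-branch GLS map, i.e.\ every branch $J$ is sent affinely and bijectively onto $[a,b]$; a one-line change of variables then gives $\lambda(L_{\omega_j}^{-1}(E)\cap J)=\lambda(J)\,\lambda(E)$ for every measurable $E\subseteq[a,b]$. Iterating this along the orbit yields
$$\lambda\big(\{x:\tau_1=k_1,\ldots,\tau_m=k_m\}\big)=\prod_{j=1}^{m}P_{k_j},$$
where, by the reflection symmetry, the branch lengths $P_{k_j}$ do not depend on whether $L_0$ or $L_1$ was applied, i.e.\ on the digits $i_j$. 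Combining this with the product structure and $\sigma$-invariance of $m_p$, both $(m_p\times\mu_P)(\phi(S))$ and $(m_p\times\lambda)(S)$ equal $m_p(C)\prod_{j=1}^{m}P_{k_j}$, establishing the identity on the generating family and hence everywhere.

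The main obstacle I anticipate is the bookkeeping in the multiplicativity step: one must check that the constraint $\tau_{j+1}=k_{j+1}$ really pulls back, under the affine full branch $L_{\omega_j}$, to a subinterval of normalized length $P_{k_{j+1}}$ \emph{independently of the current position}, and that the $L_0$- and $L_1$-branches indexed by a common return time genuinely have equal length. The reflection $x\mapsto(\beta-1)^{-1}-x$ is the clean tool for the latter, while the former is exactly the full-branch GLS property that was used to write $I$ in the form $I(\omega,x)=(\sigma\omega,L_{\omega_1}(x))$.
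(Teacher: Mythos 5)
Your proposal is correct and in substance identical to the paper's own proof: the paper likewise establishes invariance from the fact that the GLS maps $L_0,L_1$ preserve normalized Lebesgue measure together with the skew-product form of $I$, and proves the identity $(m_p\times P)\circ \phi=m_p\times \lambda$ by computing on cylinders, writing $\phi^{-1}(C\times D)=C\times J$ with $J=D_{n_1}\cap L_{i_1}^{-1}(D_{n_2})\cap\cdots$ and using the piecewise linear, full-branch (surjective) structure to conclude that $J$ is an interval of normalized length $p_{n_1}\cdots p_{n_m}/(b-a)^m$. Your multiplicativity step $\lambda\bigl(L_{\omega_j}^{-1}(E)\cap J\bigr)=\lambda(J)\,\lambda(E)$, the identification of the return-time partition with the GLS branches, and the reflection argument for equality of $L_0$- and $L_1$-branch lengths are precisely the ``easy calculation'' the paper delegates to its GLS reference, so no genuinely different route is taken.
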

\begin{proof}
By \cite[Theorems 1]{KarmaCor1}, the GLS maps $L_0$ and $L_1$ preserve  the normalized Lebesgue measure. Since the induced transformation $I$ is a skew product, it follows that $m_p\times \lambda$ is an $I$-invariant measure.  To show $(m_p\times P)\circ \phi=m_p\times \lambda$, it is enough to show that
$(m_p\times P)=(m_p\times \lambda)\circ \phi^{-1}$. Let $C=\{\omega\in\Omega:\omega_1=i_1,\cdots, \omega_m=i_m\}$ and $D=\{y\in  \{2,3,\cdots,n\}^{\mathbb{N}} :y_1=n_1,\cdots, y_m=n_m\}$. Then,
$$\phi^{-1}(C\times D)=\{(\omega,x)\in \Omega\times [a,b]: \tau(\omega,x)=n_1, \cdots,\tau(I^{m-1}(\omega,x))=n_m\}=C\times J,$$
where 
$$J=D_{n_1}\cap L_{i_1}^{-1}(D_{n_2})\cap (L_{i_2}\circ L_{i_1})^{-1}(D_{n_3})\cap \cdots \cap (L_{i_{m-1}}\circ \cdots  \circ L_{i_{1}})^{-1}(D_{n_m})$$
with $D_{n_j}=[c_{n-n_j+1}, c_{n-n_j+2})$ if $i_j=1$ and $D_{n_j}=(d_{n_j-1},d_{n_j}]$ if $i_j=0$.
Since the maps $L_0$ and $L_1$ are piecewise linear and surjective, an easy calculation shows that $J$ is an interval of length $\dfrac{p_{n_1}\cdots p_{n_m}}{(b-a)^m}=P(D)$, see \cite[Theorem 1]{KarmaCor1}. Thus,
$$(m_p\times \lambda)\bigg(\phi^{-1}(C\times D)\bigg)=m_p(C)P(D)=(m_p\times P)(C\times D).$$
\end{proof}

Now, we turn our attention in finding the intrinsically ergodic measure for $I=K_{\Omega\times[a,b]}$, i.e. the unique measure of maximal entropy. For this, we will identify the dynamics of $I$ with a full left shift. Consider the space $$\Lambda=\{(0,2), (0,3),\cdots, (0,n),(1,2), (1,3),\cdots, (1,n)\}^{\mathbb{N}}$$
Here the first coordinate denotes the outcome of the coin toss (heads=0 or tails=1), and the second denotes the return time to $\Omega\times [a,b]$. Let $S$ be the left shift on $\Lambda$, i.e. $S((i,j)_n)=(i^{'},j^{'})_n$, where $((i^{'},j^{'})_n)= (i,j)_{n+1}$. We define the following map
$$\rho:\Omega\times \{2,3,\cdots, n\}^{\mathbb{N}}\to \Lambda$$
by $$\rho((\omega, (n_1,n_2,\cdots)))=((\omega_1,n_1), (\omega_2,n_2), (\omega_3,n_3),\cdots).$$
Evidently,  $\rho$ is a bijection and $\rho \circ (\sigma\times \sigma^{'})=S\circ \rho$. This leads to the following theorem.

\begin{Theorem}\label{Importantresult}
The induced transformation $I$ is intrinsically ergodic with maximal maximal entropy $\log (2n-2)$.
\end{Theorem}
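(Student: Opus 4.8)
The plan is to transfer the whole question to a full one-sided shift, where the answer is classical, and then carry the conclusion back along the conjugacies already established. Set $\Psi=\rho\circ\phi$. By Lemma~\ref{bijection}, $\phi$ is a measurable bijection intertwining $I$ with $\sigma\times\sigma'$ (the identity $(\sigma\times\sigma')\circ\phi=\phi\circ I$ is verified in the proof of Lemma~\ref{isomorphism}), while $\rho$ is a bijection with $\rho\circ(\sigma\times\sigma')=S\circ\rho$. Composing, $\Psi$ is a measurable bijection satisfying $\Psi\circ I=S\circ\Psi$, so $(\Omega\times[a,b],I)$ is isomorphic, as a measurable dynamical system, to the full left shift $(\Lambda,S)$ on the finite alphabet $\{0,1\}\times\{2,3,\dots,n\}$, whose cardinality is $2(n-1)=2n-2$.

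First I would invoke the classical structure theory for full shifts: the full one-sided shift on an alphabet of $N$ symbols is intrinsically ergodic, its topological entropy is $\log N$, and the unique invariant probability measure attaining this value is the uniform Bernoulli measure giving weight $1/N$ to each symbol (see, e.g., \cite{Walters}). Taking $N=2n-2$, the system $(\Lambda,S)$ thus has a unique measure of maximal entropy, and that entropy equals $\log(2n-2)$.

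Next I would transport this statement through $\Psi$. Because $\Psi$ is a measurable isomorphism intertwining $I$ and $S$, the assignment $\mu\mapsto\mu\circ\Psi$ is a bijection between $S$-invariant and $I$-invariant Borel probability measures, and it preserves Kolmogorov--Sinai entropy. Hence the supremum of entropies over $I$-invariant measures is $\log(2n-2)$, and the unique entropy-maximiser for $S$ pulls back to a unique entropy-maximiser for $I$; that is, $I$ is intrinsically ergodic with maximal entropy $\log(2n-2)$.

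The single delicate point — and the only genuine obstacle — is that $\phi$, and therefore $\Psi$, is a bijection only after removing the countable set $N_0=\bigcup_{i\ge0}K^{-i}(\Omega\times\{a\}\cup\Omega\times\{b\})$, so the correspondence $\mu\mapsto\mu\circ\Psi$ is a priori only defined for measures that do not charge $N_0$. I expect this to be harmless for the following reason: the uniform Bernoulli measure on $\Lambda$ is non-atomic, so its pullback assigns zero mass to the countable set $N_0$ and is a genuine $I$-invariant measure of entropy $\log(2n-2)$; conversely, any $I$-invariant measure concentrated on the countable, invariant set $N_0$ has zero entropy, so it cannot compete for the maximum. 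Spelling this out is routine and shows that neither the value $\log(2n-2)$ nor the uniqueness of the maximiser is affected by the exceptional set, which completes the argument.
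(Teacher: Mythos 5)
Your proposal is correct and takes essentially the same route as the paper's proof: identify $I$ with the full one-sided shift on $2n-2$ symbols via the conjugacies $\phi$ and $\rho$, invoke the classical intrinsic ergodicity of full shifts with maximal entropy $\log(2n-2)$, and pull the uniform Bernoulli measure back through the isomorphism. The only difference is that you explicitly justify why the countable exceptional set $\bigcup_{i\ge0}K^{-i}(\Omega\times\{a\}\cup\Omega\times\{b\})$ is harmless (its invariant measures have zero entropy, and the pulled-back Bernoulli measure does not charge it), a point the paper's proof leaves implicit; this is a refinement of the same argument, not a different one.
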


\begin{proof} Let $m$ be the product $\left(\dfrac{1}{2n-2},\dfrac{1}{2n-2}, \cdots, \dfrac{1}{2n-2} \right)$ measure on $\Lambda$. Note that $m$ is shift invariant, and is intrinsically ergodic. Since $\rho$ is a commuting bijection, the measure $m\circ \rho$ is $\sigma\times \sigma^{\prime}$-invariant and is intrinsically ergodic. By Lemma \ref{isomorphism},
$m\circ \rho\circ \phi$ is the unique measure of maximal entropy for $I$. Since entropy is preserved under an isomorphism,  the maximal entropy is $\log (2n-2)$.
\end{proof}

\section{Invariant measures for $K$}

It is a classical fact that if $\nu$ is an $I$-invaraint probability measure on $\Omega\times [a,b]$, then 
the probability measure $\mu$ defined on $\Omega\times [T_1(a),T_0(b)]$ by
$$\mu(E)=\frac{1}{\int \tau\,d\nu} \sum_{n\ge 0}\nu(\{(\omega,x)\in \Omega\times [a,b]:\tau(\omega,x)>n\}\cap K^{-n}(E))$$
is a $K$-invariant probability measure. So for any measure $\nu$ as defined in the provious section corresponds a $K$-invariant measure.

Now we consider the intrinsically ergodic measure of $K$. It can be found via Parry's work, see \cite{Walters1}.
For the sake of convenience, we give a brief introduction to Parry's result.
Given any one-dimensional subshift of finite type with irreducibility condition, the Parry measure given by a probability vector $(p_0, p_1,\cdots, p_{k-1})$ and stochastic matrix $(p_{ij})$  is constructed as follows. If $\lambda$ is the largest positive eigenvalue of $A$ ($A=(a_{ij})$ is the adjacency matrix of the subshift of finite type) and  $(u_0, u_1,\cdots, u_{k-1})$ is a strictly positive left eigenvector and $(v_0, v_1,\cdots, v_{k-1})$ is a strictly positive right eigenvector with $\sum_{i=0}^{k-1}u_iv_i=1$, then $p_i=u_iv_i$ and $p_{ij}=\dfrac{a_{ij} v_j}{\lambda v_i}$. We state the following classical result.

\begin{Theorem}\label{Parrymeasure}
Given any one-dimensional subshift of finite type with irreducibility condition, then the Parry measure is the intrinsically ergodic measure for this subshift of finite type. The maximal  entropy is $\log \lambda$.
\end{Theorem}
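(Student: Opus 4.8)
The plan is to prove the three facts that pin down the Parry measure $\mu_P$: that the topological entropy of the subshift equals $\log\lambda$, that $\mu_P$ attains this value, and that it is the only invariant measure doing so. The variational principle $h_{\mathrm{top}}=\sup_\nu h_\nu$ links the first two, so the real content is the achievability computation together with uniqueness.

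First I would record the entropy value. Writing $A=(a_{ij})$ for the adjacency matrix, the number of admissible words of length $n$ is $N_n=\mathbf 1^{T}A^{n-1}\mathbf 1$, and irreducibility with Perron--Frobenius gives $\tfrac1n\log N_n\to\log\lambda$. Next I would verify that $\mu_P$ is genuinely a shift-invariant Markov measure: using $Av=\lambda v$ one checks $\sum_j p_{ij}=(Av)_i/(\lambda v_i)=1$, and using $u^{T}A=\lambda u^{T}$ one checks that $p_i=u_iv_i$ is stationary, $\sum_i p_ip_{ij}=p_j$. The computation that makes everything work is the telescoping identity
$$\mu_P([w_0\cdots w_{n-1}])=u_{w_0}v_{w_{n-1}}\lambda^{-(n-1)}$$
for admissible $w$, obtained by cancelling the $v$-factors in $\prod_t p_{w_tw_{t+1}}$. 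Substituting into $h_{\mu_P}=-\sum_{i,j}p_ip_{ij}\log p_{ij}$ and using $\sum_jp_{ij}=1$ together with stationarity, the $\log v$ terms cancel and leave $h_{\mu_P}=\log\lambda$.

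The heart of the argument is the upper bound together with uniqueness, which I would treat simultaneously through the conditional entropies. Since the state partition $\mathcal P=\{[i]\}$ generates, $h_\nu=\lim_n H_\nu(S^{-n}\mathcal P\mid\mathcal P^{(n)})$ and the sequence on the right is non-increasing, where $\mathcal P^{(n)}=\bigvee_{t=0}^{n-1}S^{-t}\mathcal P$. For a fixed past word $w$ ending in the symbol $i$, the conditional law of the next symbol is $\{\nu([wj])/\nu([w])\}_j$; applying the Gibbs inequality with the weights $p_{ij}=a_{ij}v_j/(\lambda v_i)$, which sum to $1$ over $j$, and using $-\log p_{ij}=\log\lambda+\log v_i-\log v_j$, I would sum over $w$ against $\nu$ and invoke shift-invariance of the one-dimensional marginals to cancel the $\log v$ terms. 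This yields $H_\nu(S^{-n}\mathcal P\mid\mathcal P^{(n)})\le\log\lambda$ for every $n$, hence $h_\nu\le\log\lambda$ for all invariant $\nu$, reconfirming $h_{\mathrm{top}}=\log\lambda$.

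Uniqueness is where the real work lies, and the conditional-entropy formulation makes it clean. If $h_\nu=\log\lambda$, then the non-increasing sequence $H_\nu(S^{-n}\mathcal P\mid\mathcal P^{(n)})$ decreases to $\log\lambda$ while each term is $\le\log\lambda$; being above its own limit, every term must equal $\log\lambda$ exactly. Tracing back the only inequality used, namely the Gibbs inequality for each $w$, equality forces $\nu([wj])/\nu([w])=p_{ij}$ for $\nu$-a.e.\ $w$ ending in $i$. Thus the conditional distribution of the next symbol depends only on the current one, so $\nu$ is Markov with transition matrix $(p_{ij})$; since an irreducible stochastic matrix has a unique stationary distribution, $\nu=\mu_P$. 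The points I expect to need the most care are the exactness of the $\log v$ cancellation (so that the only slack is the per-word Gibbs inequality), and handling merely irreducible rather than aperiodic $A$ — here no harm arises, because the argument uses only the exact eigen-relations $Av=\lambda v$ and $u^{T}A=\lambda u^{T}$ rather than convergence of $A^n/\lambda^n$.
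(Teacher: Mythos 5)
Your proposal is correct, but there is nothing in the paper to compare it against: the authors state this result as classical and give no proof, simply citing Walters' \emph{An Introduction to Ergodic Theory} (reference \cite{Walters1}), where it appears as the theorem on intrinsic ergodicity of irreducible topological Markov chains. What you have written is essentially that textbook proof (going back to Parry's 1964 paper): the telescoping identity $\mu_P([w_0\cdots w_{n-1}])=u_{w_0}v_{w_{n-1}}\lambda^{-(n-1)}$, the Gibbs-inequality bound on the conditional entropies $H_\nu\bigl(S^{-n}\mathcal P\mid\mathcal P^{(n)}\bigr)$ with the $\log v$ cancellation via stationarity of the marginals, and the equality analysis forcing any measure of entropy $\log\lambda$ to be Markov with transition matrix $(p_{ij})$, hence equal to $\mu_P$ by uniqueness of the stationary vector of an irreducible stochastic matrix. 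All the steps check out, including the two delicate points you flagged: the squeeze argument on the non-increasing sequence of conditional entropies, and the fact that only the eigen-relations $Av=\lambda v$, $u^TA=\lambda u^T$ are used, so mere irreducibility (without aperiodicity) suffices — which matters here, since the paper's matrices $S_n$ are irreducible but their aperiodicity is never discussed.
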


Recall the definition of $\beta$, 
given $n\geq 3$, let $\beta$ be the largest positive root of the following equation:
$$\beta^n=\sum_{i=0}^{n-2}\beta^i.$$
We can partition  $[T_1(a), T_0(b)]$ in terms of the image of $[a, b]$ under $K$. More precisely, let
$$\{[T_0^kT_1(a), T_0^{k+1}T_1(a)], 0\leq k\leq n-2, [a,b], [T_1^iT_0(b), T_1^{i+1}T_0(b)], 1\leq i\leq n-1\}$$
be a Markov partition of $[T_1(a), T_0(b)]$, where $T^0_{j}=id, j=0,1$. It is easy to see that  the image of each set of the Markov partition is the union of some sets of this partition. For instance, when $n=3$, let 
$$A=[T_1(a), T_0T_1(a)], B=[ T_0T_1(a), a],  C=[a,b],D=[ b, T_1T_0(b)],E=[  T_1T_0(b), T_0(b)].$$
Evidently, 
$$T_0(A)=B, T_0(B)=C, T_0(C)=D\cup E, T_1(C)=A\cup B, T_1(D)=C, T_1(E)=D.$$
Hence the associated adjacency matrix for this Markov partition is 
$$
 S_3=\begin{pmatrix}
  0 & 1 & 0 & 0& 0 \\
   0 & 0& 1 & 0& 0 \\ 
 1 & 1 & 0 & 1& 1 \\ 
 0 & 0& 1 & 0& 0 \\
 0 & 0 & 0 & 1& 0 \\
 \end{pmatrix}
.$$
This matrix can generate a subshift of finite type,  denoted by $\Sigma_3$, i.e. $$ \Sigma_3=\{({i_n})\in\{1,2,3,4,5\}^{\mathbb{N}}:S_{3_{{i_n}, {i_{n+1}}}}=1\}.$$
Similarly, for general $n$, we can find the adjacency  matrix $S_n$ and its corresponding subshift of finite type $\Sigma_n$. 
It is easy to see that the matrix $S_n$ is irreducible. Hence, we can make use of Parry's idea to find the unique measure of maximal entropy.

Denote $a_n=\det{(\lambda E-S_n)}$. The following lemma is doing some trivial calculation in linear algebra. 
\setcounter{Lemma}{1}
\begin{Lemma}
$a_{n+1}=\lambda^2 a_n-2\lambda^n$ for any $n\geq 3$, and $a_3=\lambda^2(\lambda^3-2\lambda-2)$. 
By induction, we have 
$$a_n=\lambda^{n-1}(\lambda^n-2(1+\lambda+\lambda^2+\cdots+\lambda^{n-2})).$$
The right eigenvector of $S_n$ is
$$\vec{v}=(v_0,v_1,\cdots, v_{2n-2})=(c, \lambda c, \lambda^2 c, \cdots, \lambda^{n-2} c, \lambda^{n-1} c, \lambda^{n-2}c, \lambda^{n-3}c, \cdots, \lambda c, c)$$
where $c> 0$. 

The left eigenvector  of $S_n$, denoted by $\vec{u}=(u_0, u_1, u_2,\cdots, u_{2n-2})$, is 
\begin{eqnarray*}
\left(d, \dfrac{1+\lambda}{\lambda}d, \dfrac{1+\lambda+\lambda^2}{\lambda^2}d,\cdots, \dfrac{1+\lambda+\cdots+\lambda^{n-2}}{\lambda^{n-2}}d,\lambda d,\dfrac{1+\lambda+\cdots+\lambda^{n-2}}{\lambda^{n-2}}d,\cdots, \dfrac{1+\lambda}{\lambda}d,d \right)
\end{eqnarray*}
where $d> 0$. 
By the construction of the Parry measure, we assume  $\vec{u}\cdot \vec{v}=1$, which implies that  $c$ and $d$ have following relation
$$\dfrac{1}{cd}=\dfrac{2}{\lambda-1}\left(\lambda^{n-1}-n+\dfrac{\lambda^n}{2}\right)+\lambda^n.$$
\end{Lemma}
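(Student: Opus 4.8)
The plan is to first make the combinatorial structure of $S_n$ completely explicit for every $n$, since only $S_3$ is written out above, and then treat the three assertions (the determinant, the two eigenvectors, and the normalization constant) as direct verifications built on that structure. Reading off the transitions of the Markov partition, the digraph of $S_n$ on its $2n-1$ states has a central hub $c$ (the interval $[a,b]$, state $n-1$), a left feeder path $0\to 1\to\cdots\to(n-2)\to c$ coming from the images $[T_0^kT_1(a),T_0^{k+1}T_1(a)]$, and a symmetric right feeder path $(2n-2)\to(2n-3)\to\cdots\to n\to c$; the hub in turn fans back out with an edge $c\to j$ to every non-hub state $j$. Thus every row except the hub row has a single $1$, the hub row has $1$'s in all columns but its own, and each left (resp. right) interval carries only $T_0$ (resp. $T_1$), which is exactly what pins down the single outgoing edge.

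For the determinant I would avoid a long cofactor recursion and instead use the digraph (coefficient-theorem) expansion of $\det(\lambda E-S_n)$ as a signed sum over vertex-disjoint unions of cycles, in which a single cycle of length $m$ contributes $-\lambda^{2n-1-m}$ and the identity contributes $\lambda^{2n-1}$. The key structural observation is that every cycle passes through the hub $c$: the two feeder paths are acyclic, so a cycle is forced to leave $c$ along some edge $c\to k$ and then follow the unique forced path back to $c$. Consequently no two cycles can be vertex-disjoint, and each linear subgraph contains at most one cycle. The cycles through $c$ have lengths $2,3,\dots,n$ on the left and the same lengths $2,3,\dots,n$ on the right, so the expansion collapses to $\det(\lambda E-S_n)=\lambda^{2n-1}-2\sum_{m=2}^{n}\lambda^{2n-1-m}$, which is precisely the closed form $a_n=\lambda^{n-1}(\lambda^n-2(1+\lambda+\cdots+\lambda^{n-2}))$; the recursion $a_{n+1}=\lambda^2a_n-2\lambda^n$ and the base case $a_3=\lambda^2(\lambda^3-2\lambda-2)$ then drop out as one-line algebra. (A reader preferring the stated recursion can instead expand $\det(\lambda E-S_{n+1})$ along the two new rows and columns of the lengthened feeder paths and reduce to $S_n$, but the sign bookkeeping there is the step I expect to be the main obstacle.)

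The two eigenvectors I would verify by substituting the claimed vectors coordinate by coordinate. For $S_n\vec v=\lambda\vec v$, the feeder-path rows each read $v_{k+1}=\lambda v_k$ (and the mirror relation on the right), which forces the geometric profile $c,\lambda c,\dots,\lambda^{n-1}c,\dots,\lambda c,c$, while the single hub row reads $\sum_{k=0}^{n-2}v_k+\sum_{j=n}^{2n-2}v_j=\lambda v_{n-1}$, and this collapses to the defining equation $\lambda^n=2(1+\lambda+\cdots+\lambda^{n-2})$ of the Perron root $\lambda$ (equivalently $a_n(\lambda)=0$). The left eigenvector $\vec u\, S_n=\lambda\vec u$ is handled dually: the incoming edges to a left state $k\ge 1$ are $k-1\to k$ and $c\to k$, giving $u_{k-1}+u_{n-1}=\lambda u_k$, which telescopes to $u_k=\dfrac{1+\lambda+\cdots+\lambda^k}{\lambda^k}d$, and the hub column again reproduces the characteristic equation.

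Finally, for the normalization I would compute $\vec u\cdot\vec v=\sum_i u_iv_i$. The left block contributes $\sum_{i=0}^{n-2}(1+\lambda+\cdots+\lambda^i)\,cd$, the right block contributes the same by symmetry, and the hub contributes $\lambda^n cd$; setting the total equal to $1$ gives $\dfrac{1}{cd}=2\sum_{i=0}^{n-2}(1+\lambda+\cdots+\lambda^i)+\lambda^n$. Evaluating the double geometric sum as $\dfrac{1}{\lambda-1}\left(\dfrac{\lambda^n-\lambda}{\lambda-1}-(n-1)\right)$ and simplifying with the identity $\lambda^{n+1}=\lambda^n+2\lambda^{n-1}-2$, itself an immediate consequence of $a_n(\lambda)=0$, rewrites this as the stated $\dfrac{1}{cd}=\dfrac{2}{\lambda-1}\left(\lambda^{n-1}-n+\dfrac{\lambda^n}{2}\right)+\lambda^n$. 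None of these verifications is deep; the only genuinely delicate point is getting the general shape of $S_n$ and its cycle set exactly right, after which every claim is a routine computation.
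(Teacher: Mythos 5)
Your proposal is correct, and in fact it fills a hole rather than re-deriving an argument: the paper gives no proof of this lemma at all, dismissing it as ``doing some trivial calculation in linear algebra,'' and the only strategy it telegraphs --- the recursion $a_{n+1}=\lambda^2a_n-2\lambda^n$ followed by ``by induction'' --- points to a cofactor expansion of $\det(\lambda E-S_{n+1})$ and an inductive passage to the closed form. Your route for the determinant is genuinely different: you first write down the digraph of $S_n$ for general $n$ (central hub, two feeder paths of length $n-1$, hub fanning back out to every non-hub state), observe that every cycle must pass through the hub so that no linear subgraph contains two vertex-disjoint cycles, and then read off $\det(\lambda E-S_n)=\lambda^{2n-1}-2\sum_{m=2}^{n}\lambda^{2n-1-m}=\lambda^{n-1}\bigl(\lambda^n-2(1+\lambda+\cdots+\lambda^{n-2})\bigr)$ from the coefficient (cycle) expansion, obtaining the closed form directly and the recursion and base case $a_3=\lambda^2(\lambda^3-2\lambda-2)$ as one-line corollaries. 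This avoids the sign bookkeeping of an inductive cofactor expansion and, importantly, supplies the explicit general structure of $S_n$, which the paper exhibits only for $n=3$; the trade-off is invoking the cycle expansion of characteristic polynomials rather than bare matrix algebra. Your eigenvector checks and the normalization $\vec u\cdot\vec v=1$ are the direct substitutions any proof must make (the hub row/column correctly collapsing to the characteristic equation $\lambda^n=2(1+\lambda+\cdots+\lambda^{n-2})$), and your reduction of $2\sum_{k=0}^{n-2}(1+\lambda+\cdots+\lambda^k)+\lambda^n$ to the stated expression for $1/(cd)$ via the identity $\lambda^{n+1}=\lambda^n+2\lambda^{n-1}-2$ is exactly right --- it also correctly isolates the one subtlety here, namely that the stated formula for $1/(cd)$ is an identity at the Perron root (where $a_n(\lambda)=0$), not a polynomial identity in $\lambda$.
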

Now we can find the Parry measure as follows, given any $(a_1a_2\cdots a_k)\in\{1,\cdots 2n-2\}^{k}$, the Parry measure defined on the cylinder 
$[a_1a_2\cdots a_k]$ is 
$$\mu([a_1a_2\cdots a_k])=p_{a_1}p_{a_1a_2}\cdots p_{a_{k-1}a_k}.$$

Let $\nu$ be the induced measure of $\mu$ on $\Omega\times [a,b]$, that is 
$$\nu(E)=\dfrac{\mu(E)}{\mu(\Omega\times [a,b])},$$
for $E$ a measurable subset of $\Omega\cap [a,b]$. By Abramov formula, 
$$h(K, \mu)=h(I, \nu)\times \mu(\Omega\times [a,b]),$$
where $h$ denotes the entropy of the underlying system, and $I=K_{\Omega\times [a,b]}$.
By the construction of the Parry measure, 
$$h(I, \nu)=\dfrac{\log \lambda}{u_n v_n}=\dfrac{\log \lambda}{cd\lambda^n}.$$

To prove the remaining part of Theorem \ref{Main11111}, we need to compare $h(I, \nu)=\dfrac{\log \lambda}{cd\lambda^n}$ with $\log(2n-2)$, the maximal entropy of $I$.

\begin{Lemma}\label{Compare}
For any $n\geq 3$, 
 $$\log(2n-2)>\dfrac{\log \lambda}{cd\lambda^n}.$$
\end{Lemma}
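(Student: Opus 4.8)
The plan is to eliminate the term $\lambda^{n}$ using the equation defining $\lambda$, reduce the comparison to an elementary bound that holds uniformly, and then dispatch the two smallest cases by hand.

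First I would put the induced entropy in a transparent form. Recall $\lambda\in(1,2)$ is the Perron root of the irreducible matrix $S_n$, i.e. the largest root of $\lambda^{n}=2(1+\lambda+\cdots+\lambda^{n-2})$ (the bounds $1<\lambda<2$ follow from the signs of this polynomial at $1$ and $2$). Feeding the expression for $1/(cd)$ from the preceding lemma into $1/(cd\lambda^{n})$ and simplifying gives
\[
\frac{1}{cd\lambda^{n}}=\frac{1}{\lambda-1}\left(\lambda+\frac{2}{\lambda}-\frac{2n}{\lambda^{n}}\right).
\]
Multiplying the defining equation by $\lambda-1$ produces the clean identity $\lambda^{n-1}(2-\lambda)(\lambda+1)=2$, equivalently $2n/\lambda^{n}=n(2-\lambda)(\lambda+1)/\lambda$, which is what removes the awkward $\lambda^{n}$ from the display above.

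The heart of the proof is the uniform estimate $1/(cd\lambda^{n})<3$. Substituting the identity into the previous display, a direct computation yields the exact gap
\[
3-\frac{1}{cd\lambda^{n}}=\frac{(2-\lambda)\big[(n-2)\lambda+(n-1)\big]}{\lambda(\lambda-1)},
\]
whose right-hand side is manifestly positive for every $n\ge 3$ because $1<\lambda<2$. Hence $h(I,\nu)=\dfrac{\log\lambda}{cd\lambda^{n}}<3\log\lambda<3\log 2=\log 8$, the last inequality using $\lambda<2$. Since $\log(2n-2)\ge\log 8$ for all $n\ge 5$, the lemma follows immediately on that range (with strict inequality, as our bound on $h(I,\nu)$ is strict).

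What remains are the cases $n=3$ and $n=4$, where $2n-2$ equals $4$ and $6$ and the crude constant $\log 8$ is too large to conclude; indeed this is exactly where the inequality is genuinely tight (the margin at $n=3$ is only about $0.04$). For these I would enclose $\lambda$ in an explicit rational interval from the sign of the defining polynomial — for instance $\lambda_3\in(1.769,1.770)$ from $\lambda^{3}-2\lambda-2$ — and evaluate the closed form for $1/(cd\lambda^{n})$ on that interval, confirming $\frac{\log\lambda}{cd\lambda^{n}}<\log(2n-2)$ with a safe numerical margin. I expect this last step to be the only real obstacle: the $n$-dependence of $\lambda$ blocks a single closed-form comparison, and the tightness at $n=3$ forces the two small cases to be verified by honest (though entirely elementary) estimates rather than swallowed by the uniform bound.
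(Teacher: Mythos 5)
Your proposal is correct and follows essentially the same route as the paper: your uniform bound $\frac{1}{cd\lambda^{n}}<3$ is precisely the paper's reduction to showing the exponent $\frac{2}{(\lambda-1)\lambda^{n}}\left(\lambda^{n-1}-n+\frac{\lambda^{n}}{2}\right)$ is less than $2$ (since $\frac{1}{cd\lambda^{n}}$ equals that exponent plus $1$), both deduced from the defining equation of $\lambda$ together with $1<\lambda<2$, after which both arguments conclude via $3\log\lambda<\log 8\le\log(2n-2)$ for $n\ge 5$ and a direct check of $n=3,4$. Your exact-gap identity and interval estimates are simply a cleaner, more careful write-up of the same steps (and you correctly compare against $\log 4$ at $n=3$, where the paper's proof text misprints $\log 6$).
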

\begin{proof}
For $n=3$, we have to show $$\log6>\dfrac{\log \lambda}{cd\lambda^3}=\dfrac{\log \lambda}{\lambda^3}\left(\dfrac{2}{\lambda-1}\left(\lambda^2-3+\dfrac{\lambda^2}{2}\right)\right).$$
This is trivial as we can find the exact value of $\lambda$ in terms of some polynomial. Similarly, for $n=4$ the lemma is  still correct. 
Hence, it suffices to prove this lemma when $n\geq 5.$
Note that $\lambda $ is the largest  positive root of the following equation
$$\lambda^n-2(1+\lambda+\lambda^2+\cdots+\lambda^{n-2})=0.$$
Since $S_n$ is irreducible, it follows by Perron-Frobenius Theorem that such a $\lambda$ exists, and furthermore $1<\lambda<2$. By the construction of the Parry measure,  it follows that $$\dfrac{1}{cd}=\dfrac{2}{\lambda-1}\left(\lambda^{n-1}-n+\dfrac{\lambda^n}{2}\right)+\lambda^n.$$
Hence,  in order to prove 
$$\dfrac{\log \lambda}{cd\lambda^n}=\left[\dfrac{2}{\lambda-1}\left(\lambda^{n-1}-n+\dfrac{\lambda^n}{2}\right)+\lambda^n\right]\dfrac{\log \lambda}{\lambda^n}<\log(2n-2),$$ it suffices to prove that 
$$\lambda^{\dfrac{2}{(\lambda-1)\lambda^n}\left(\lambda^{n-1}-n+\frac{\lambda^n}{2}\right)}< \dfrac{2n-2}{\lambda}.$$
Since $n\geq 5$ and $1<\lambda<2$,  it follows that $\dfrac{2n-2}{\lambda}\geq \dfrac{8}{\lambda}\geq \lambda^2$. 
Hence it remains to show that 
$$\dfrac{2}{(\lambda-1)\lambda^n}\left(\lambda^{n-1}-n+\dfrac{\lambda^n}{2}\right)<2.$$
However, this inequality  immediately follows  from 
$$\lambda^n-2(1+\lambda+\lambda^2+\cdots+\lambda^{n-2})=0$$ 
and $1<\lambda<2$. 
\end{proof}
\begin{proof}[Proof of Theorem \ref{Main11111}]
By Lemma \ref{Compare}, Theorem \ref{Importantresult} and Theorem \ref{Parrymeasure}, we finish the proof of Theorem \ref{Main11111}.
\end{proof}

\section{Some remarks}
The shrinking random $\beta$-transformation we defined is very special. For a general sub switch region, i.e. $(a,b)\subset [\beta^{-1}, \beta^{-1}(\beta-1)^{-1}]$, does the intrinsically ergodic measure exist?  For general $1<\beta<2^{-1}(1+\sqrt{5})$, how can we find an invariant measure (or intrinsically ergodic measure) for the  shrinking random $\beta$-transformation? In the setting of classical random beta transformation, similar questions can be considered, see \cite{SD}. 
\section*{Acknowledgements}
 The second  author was supported the National Natural Science Foundation of China no. 11271137 and 
 by China Scholarship Council grant number 201206140003.

\label{}





\end{document}